\def\fullpage {
\addtolength{\topmargin}{-2 cm}
\addtolength{\oddsidemargin}{-0.9cm} \addtolength{\textwidth}{+2 cm}
\addtolength{\textheight}{+4 cm}}
\newtheorem{thm}{Theorem}
\newtheorem{lemma}[thm]{Lemma}
\newtheorem{conj}[thm]{Conjecture}
\newtheorem{prop}[thm]{Proposition}
\theoremstyle{remark}
\theoremstyle{definition}
\newtheorem{definition}[thm]{Definition}
\newcommand{\Ex}{\mathop{\bf E\/}}
\newcommand{\Var}{\mathop{\bf Var\/}}
\begin{document}

\title{Counting independent sets in triangle-free graphs}
\author{Jeff Cooper\thanks{Department of Mathematics, Statistics, and Computer
Science, University of Illinois at Chicago, IL 60607;  email:
jcoope8@uic.edu},
Dhruv
Mubayi\thanks{Department of Mathematics, Statistics, and Computer
Science, University of Illinois at Chicago, IL 60607;  email:
mubayi@math.uic.edu; research  supported in part by  NSF grant DMS
0969092.}}

\maketitle

\vspace{-0.4in}

\begin{abstract}

Ajtai, Koml\'os, and Szemer\'edi proved that for sufficiently large $t$ every triangle-free graph with $n$ vertices and average degree $t$ has an independent set of size at least $\frac{n}{100t}\log{t}$.
We extend this by proving that the number of independent sets in such a graph is at least
\[
2^{\frac{1}{2400}\frac{n}{t}\log^2{t}}.
\]
This result is sharp for infinitely many $t,n$ apart from the constant.
An easy consequence of our result is that there exists $c'>0$ such that every $n$-vertex triangle-free graph has at least
\[
2^{c'\sqrt n \log n}
\]
independent sets. We conjecture that the exponent above can be improved to $\sqrt{n}(\log{n})^{3/2}$. This would be sharp by the celebrated result of Kim which shows that the Ramsey number $R(3,k)$ has order of magnitude $k^2/\log k$.

\end{abstract}

\section{Introduction}
An independent set in a graph $G=(V,E)$ is a set $I \subset V$ of vertices such that no two vertices in $I$ are adjacent. The independence
number of $G$, denoted $\alpha(G)$, is the size of the
largest independent set in $G$.  Determining the independence number of a graph is one of the most pervasive and fundamental problems in graph theory. The independence number  naturally arises when studying other fundamental graph parameters like the chromatic number (minimum size of a partition of $V$ into independent sets), clique number (independence number of the complementary graph), minimum vertex cover  (complement of a maximum independent set), matching number (independence number in the line graph) and many others.

Throughout this paper, we suppose that $G$ is a graph with $n$ vertices and average degree $t$. Turan's \cite{turan} basic theorem of extremal graph theory, in complementary form, states that $\alpha(G) \geq \lceil n/(t+1)\rceil $
for any graph $G$. This bound is tight, as demonstrated by the complement of Turan's graph $G=\overline{T(n, r)}$ which, in the case $n=kr$ is the disjoint union of $r$ cliques, each with $k$ vertices (then $\alpha(G)=r$ and $t=k-1$).
 Since $G$ contains large cliques it is natural to ask whether Tur\'an's bound on $\alpha(G)$ can be improved if we prohibit cliques of a prescribed (small) size in $G$.

In \cite{trifreeaks}, Ajtai, Koml\'os, and Szemer\'edi showed that if $G$ contains no $K_3$, then this is indeed the case, by improving Tur\'an's bound by a factor that is logarithmic in $t$. More precisely, they proved that if $G$ is triangle-free, then
$$\alpha(G) \geq \frac{n}{100t}\log{t}.$$
 Shortly after, Shearer \cite{trifreeshearer} improved this to
$\alpha(G) \geq (1-o(1))\frac{n}{t}\log{t}$ (assume for convenience throughout this paper that  $\log=\log_2$). Random graphs \cite{trifreeshearer} show that for infinitely many $t$ and $n$ with $t=t(n)\rightarrow \infty$ as $n \rightarrow \infty$, there are $n$-vertex triangle-free graphs
with average degree $t$ and independence number $(2-o(1))((n/t)\log t)$.  Consequently, the results of \cite{trifreeaks, trifreeshearer}  cannot be improved apart from the multiplicative constant.

There is a tight connection between the problem of determining $\alpha(G)$  and questions in Ramsey theory. More precisely, determining the minimum possible $\alpha(G)$ for a triangle-free $G$ is equivalent to determining the Ramsey number $R(3, k)$, which is the minimum $n$ so that
every graph on $n$ vertices contains a triangle or an independent set of size $k$. Moreover,  the above lower bounds for $\alpha(G)$ are equivalent to the upper bound $R(3,t) = O(t^2/\log t)$. It was a major open problem, dating back to the 1940's, to determine the order of magnitude of $R(3,t)$, and this was achieved by
 Kim \cite{trifreekim} who showed that for every $n$
sufficiently large, there exists an $n$-vertex  triangle-free graph $G$ with  $\alpha(G)<9\sqrt{n \log n}$.  As a consequence, the upper bound  $R(3,t)=O(t^2/\log t)$ from \cite{trifreeaks} is of the correct order of magnitude.

In this paper, our goal is to take the result of Ajtai, Koml\'os, and Szemer\'edi~\cite{trifreeaks} further by not only finding an independent set of the size guaranteed by their result, but by showing that many of the vertex subsets of approximately that size are independent sets.
\begin{definition}
Given a graph $G$, let $i(G)$ denote the number of independent sets in $G$.
\end{definition}

Upper bounds for $i(G)$ have been motivated by combinatorial group theory.
In \cite{upperalon}, Alon showed that if $G$ is a  $d$-regular graph, then $i(G) \leq 2^{(1/2+o(d))n}$;
he also conjectured that
$$i(G) \leq (2^{d+1}-1)^{n/2d}.$$
Kahn~\cite{upperkahn} proved this conjecture for $d$-regular bipartite graphs.
Galvin~\cite{uppergalvin} obtained a similar bound for any  $d$-regular graph $G$, namely
$$i(G) \leq 2^{n/2(1+1/d+c/d\sqrt{\log{d}/d})}$$ for some constant $c$ .
Finally, Zhao~\cite{upperzhao} recently resolved Alon's conjecture.

In this paper, we consider lower bounds for $i(G)$.
This problem is fundamental in extremal graph theory, indeed, the Erd\H os-Stone theorem ~\cite{linearerdos} gives a lower bound for $i(G)$ that is the correct order of magnitude {\em provided $n/t$ is a constant}. More recently, the problem in the range $t=\Theta(n)$ has been investigated by Razborov~\cite{tridensityrazborov},
Nikiforov~\cite{tridensitynikiforov}, and Reiher.
For example, the results of Razborov and Nikiforov determine $g(\rho,3)$,
the minimum triangle density of an $n$-vertex graph with edge density $\frac{1}{2}<\rho<1$.
Looking at the complementary graph, this gives tight lower bounds on the number of independent
sets of size three in a graph with density $1-\rho\sim\frac{t}{n}$.

Lower bounds
 for $i(G)$ appear not to have been studied with the same intensity when $t$ is much smaller than $n$, in particular, when $t \rightarrow \infty$ and $t/n \rightarrow 0$.
Let us make some easy observations that are relevant for our work here. We assume that $\alpha:=\alpha(G) \le n/4$.
Since every subset of an independent set is also independent, Turan's theorem implies
\[
i(G) \geq 2^{\alpha} \geq 2^{n/(t+1)}.
\]
In Section \ref{general}, we will improve this to
\begin{equation} \label{easyprop}
i(G) \geq 2^{\frac{1}{250}\frac{n}{t}\log{t}}.
\end{equation}
Our proof uses the standard probabilistic argument which establishes the order of magnitude given by Tur\'an's bound on $\alpha(G)$. This result is certainly not new, and we present it only to serve as a warm-up for our main result in Section \ref{trifree}.  Let us observe below that the result is essentially tight.

As no subset of size more than $\alpha(G)$ is independent, an easy upper bound on $i(G)$ (using $\alpha \le n/4$)
is
\begin{equation}\label{easyup}
i(G) \leq \sum_{i=0}^{\alpha}{\binom{n}{i}} \leq
	2\binom{n}{\alpha}.
\end{equation}
Since $\alpha(\overline{T(kr,r)}) = r = n/(t+1)$ (recall that $n=kr$ and $t=k-1$), this bound implies that as $n \rightarrow \infty$
\[
i(\overline{T(kr,r)}) \leq 2\binom{kr}{r} \leq 2(ek)^{r} = 2^{1+r\log{ek}} = 2^{(1+o(1))\frac{n}{t}\log{t}}.
\]
Thus, apart from the constant, the exponent in \eqref{easyprop} cannot be improved.

Our main result addresses the case where $G$ contains no triangles. As in the case
of the independence number, prohibiting triangles improves the bound in \eqref{easyprop}.
\begin{thm} {\bf (Main Result)} \label{mr}
Suppose that $G$ is a triangle-free graph on $n$ vertices with average degree
$t$, where $t$ is sufficiently large. Then
\begin{equation} \label{formula}i(G) \geq 2^{\frac{n}{2400t}\log^2{t}}.\end{equation}
\end{thm}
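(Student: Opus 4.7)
The strategy is to lower bound $\log_2 i(G)$ by the Shannon entropy $H(\mu)$ of a carefully chosen probability distribution $\mu$ supported on independent sets of $G$; since entropy never exceeds the log of the support size, $\log_2 i(G) \ge H(\mu)$. Our target $H(\mu) \ge (n/2400\,t)\log^2 t$ decomposes naturally as $\alpha^\ast \cdot \log t$, where $\alpha^\ast = (n/t)\log t$ is the Ajtai--Koml\'os--Szemer\'edi/Shearer independence number, which suggests a two-scale construction: generate a random independent set of size roughly $\alpha^\ast$, and at the generation of each element exploit triangle-freeness to inject an additional $\log t$ bits of entropy.

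I would begin with the standard regularization. Iteratively delete any vertex of current degree above $Ct$ for a sufficiently large absolute constant $C$; this removes at most half of the vertices, preserves triangle-freeness, and absorbs only a constant into the eventual $1/2400$. Henceforth $G$ has $\Omega(n)$ vertices and maximum degree $O(t)$, and any independent set found in the reduced graph is independent in the original.

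Next I would run an AKS/Shearer-style probabilistic procedure in which, at each round, a vertex $v$ is selected from the current active set $R$; by triangle-freeness, $N(v) \cap R$ is itself an independent set of size up to $\Theta(t)$. Rather than merely deciding whether to add $v$ to the output $I$, flip independent biased coins on each vertex of $N(v) \cap R$ with parameter $p = \Theta((\log t)/t)$ to decide membership in $I$, then remove $\{v\}\cup N(v)\cap R$ from $R$ together with the external neighbors of the freshly chosen vertices. The key entropy calculation is that each round contributes $|N(v)\cap R|\cdot h(p)=\Theta(\log^2 t)$ bits via the chain rule, where $h$ denotes the binary entropy function; with a careful amortization over the $\Theta(n/t)$ effective rounds, one obtains $H(\mu)\ge c(n/t)\log^2 t$ for a constant $c$ that drives the $1/2400$ in the theorem.

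The principal obstacle is to show that the entropy truly aggregates over the rounds without being washed out by conditioning. Vertices included in $I$ at early rounds reduce the available candidates at later rounds, and one must ensure that the random output remains a bona fide independent set while preserving distinctness of the outputs of different randomness trajectories. The analysis must therefore jointly track the size and average degree of $R$ throughout the procedure, in the spirit of the differential-equations method underlying the original AKS proof, so that the per-round $\Theta(\log^2 t)$ entropy persists in expectation across all rounds. Triangle-freeness is used at two distinct points: to guarantee that each $N(v)\cap R$ is an independent set, which supplies the local source of extra entropy, and to control the number of edges among the freshly selected vertices at different rounds, since otherwise the forced deletions would shrink the effective per-round entropy below $\log^2 t$.
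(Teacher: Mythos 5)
Your strategy---lower bounding $\log_2 i(G)$ by the entropy of a random independent set produced by an AKS/Shearer-type process---is a genuinely different route from the paper, which instead counts sequences $(H_1,\dots,H_R)$ of random batches produced by an algorithm run for only $R=\lfloor(\log t)/2\rfloor$ rounds. Unfortunately, the accounting in your procedure loses a factor of $\log t$, which is precisely the advantage Theorem~\ref{mr} claims over the trivial bound $i(G)\geq 2^{\alpha(G)}\geq 2^{\Omega((n/t)\log t)}$ that already follows from Ajtai--Koml\'os--Szemer\'edi.

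Concretely, write $d_j=|N(v_j)\cap R_j|$ for the number of coins flipped in round $j$, and take $p=c(\log t)/t$. Since the coin flips can be recovered from the output $I$ (the deterministic vertex-selection rule lets you peel off $I_1=I\cap N(v_1)$, then $R_2$, $v_2$, $I_2$, and so on), the output entropy is $h(p)\sum_j \Ex[d_j]$, and $h(p)=O((\log^2 t)/t)$. But round $j$ also adds about $pd_j$ vertices to $I$, each of degree $O(t)$ after regularization, and triangle-freeness only forces these neighborhoods to overlap at the single vertex $v_j$: in a bipartite or random triangle-free graph they are otherwise essentially disjoint. So round $j$ deletes on the order of $d_j+(pd_j)\cdot O(t)=\Theta(d_j\log t)$ vertices, giving the budget constraint $\sum_j \Ex[d_j]\cdot\log t=O(n)$, i.e.\ $\sum_j\Ex[d_j]=O(n/\log t)$. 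Therefore
\[
H(\mu)\ =\ h(p)\sum_j\Ex[d_j]\ =\ O\!\left(\frac{\log^2 t}{t}\cdot\frac{n}{\log t}\right)\ =\ O\!\left(\frac{n}{t}\log t\right),
\]
which falls short of \eqref{formula} by a factor of $\log t$. No amortization or differential-equations tracking of $(|R|,t(R))$ can close this gap, because the same quantity $\sum_j d_j$ governs both the entropy produced and the vertices consumed, and the ratio between the two is fixed at $h(p)/(1+pt)=\Theta((\log t)/t)$ per unit. In other words, ``$\Theta(n/t)$ rounds each worth $\Theta(\log^2 t)$ bits'' is not sustainable: once you include $\Theta(\log t)$ vertices in a round you pay $\Theta(t\log t)$ in deletions.

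The paper's proof sidesteps this with a different decomposition of the target $(n/t)\log^2 t$. It runs only $R\approx(\log t)/2$ rounds, and in each round draws a \emph{large} uniformly random batch $H_i$ of $k=\lfloor n/(200t)\rfloor$ low-degree vertices of the surviving graph $M_i$, removing only $N(H_i)$. Lemma~\ref{akslemma} guarantees that typically $n_{i+1}>n_i/2$ and the ratio $n_i/t_i$ barely decays, so the process survives all $R$ rounds with $n_R>n/\sqrt t$ vertices and $n_i/t_i>\tfrac12(n/t)$; the entropy per round is $\approx\log\binom{|L_i|}{k}\approx k\log(n_i/k)\approx(n/(200t))\log t$ bits, and over $R\approx(\log t)/2$ rounds this accumulates to $\Theta((n/t)\log^2 t)$. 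The proof then divides by the multiplicity with which a fixed $I$ is produced (the choice of $H\setminus I$ and of the ordered partition of $H$ into $H_1,\dots,H_R$), which is a lower-order correction. If you want to retain the entropy framing, you would need to recast it around such batch choices rather than per-vertex coin flips at a single center $v$.
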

\noindent
Suitable modifications of Random graphs provide constructions of $n$-vertex triangle-free graphs $G$ with average degree $t=t(n)\rightarrow \infty$ as $n \rightarrow \infty$, and $\alpha(G)=O((n/t) \log t)$. Plugging this into (\ref{easyup}), we see that Theorem \ref{mr}
is tight (apart from the constant) for infinitely many $t$. However, it remains open if the theorem is sharp for all $t$ where $t=n^{1/2+o(1)}$.  Indeed, the open problem that remains is to obtain a sharp lower bound on $i(G)$ for triangle-free graphs with no restriction on degree. Since all subsets of the neighborhood of a vertex of maximum degree are independent, $i(G) > 2^t$.  Combining this with (\ref{formula}) we get
$$i(G)> \max\{2^t, 2^{\frac{n}{2400t}\log^2{t}} \} > 2^{c n^{1/2} \log n}$$
for some constant $c>0$. We conjecture that this can be improved as follows.

\begin{conj} There is an absolute positive constant $c$ such that every $n$-vertex triangle-free graph $G$ satisfies
$$i(G) > 2^{c n^{1/2} (\log n)^{3/2}}.$$\end{conj}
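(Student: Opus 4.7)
My plan is to argue by cases on the average degree $t=t(G)$, since the conjectured exponent $\sqrt{n}(\log n)^{3/2}$ already falls out of the two bounds at our disposal---the trivial $i(G)\ge 2^t$ and Theorem~\ref{mr}---at the two endpoint regimes, and only needs a genuinely new idea in the middle. When $t\ge \sqrt{n}(\log n)^{3/2}$, the trivial bound $i(G)\ge 2^t$ (subsets of the independent neighborhood of a maximum-degree vertex) suffices immediately. When $t\le \sqrt{n\log n}$, Theorem~\ref{mr} gives $\log_2 i(G)\ge (n\log^2 t)/(2400t)$, which is decreasing in $t$ on this range; at its worst point $t=\sqrt{n\log n}$ it still evaluates to $\tfrac{1}{9600}\sqrt{n}(\log n)^{3/2}(1+o(1))$, which is exactly the desired order.

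What remains is the intermediate window $\sqrt{n\log n}<t<\sqrt{n}(\log n)^{3/2}$. A first natural attempt is to pass to a random induced subgraph $G[W]$ of size $m$ and apply Theorem~\ref{mr} there: since $G[W]$ has expected average degree $tm/n$, this yields $\log_2 i(G)\ge (n/2400t)\log^2(tm/n)$, which is never larger than what Theorem~\ref{mr} gives on $G$ itself. A second attempt---deleting all vertices of degree exceeding $2t$ to obtain a bounded-degree subgraph of comparable average degree---fails for the same reason. So sub-sampling alone cannot close the $\sqrt{\log n}$-factor gap.

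The heart of the conjecture, then, is to extract that extra $\sqrt{\log n}$ factor in the intermediate regime by a genuinely new argument. One promising direction is to refine Theorem~\ref{mr} into a two-parameter bound tracking both the average degree $t$ and the maximum degree $\Delta$: in the hard window there must exist a vertex $v$ with $\Delta\ge t\gg\sqrt{n}$, whose independent neighborhood contributes $2^\Delta$ independent sets, while the triangle-free graph on $V\setminus N[v]$ should still contribute many more via an AKS--Shearer-type argument. The difficulty---and what appears to be the principal obstacle---is combining these two contributions without double counting; a clean execution would likely require either an entropy argument calibrated to Kim-type pseudo-random triangle-free graphs (which are the expected extremal examples) or a hypergraph-container-based analysis tailored to the intermediate-degree regime.
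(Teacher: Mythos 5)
The statement you were asked to prove is labeled a \emph{conjecture} in the paper, and the paper offers no proof of it: the authors only establish the weaker bound $i(G) > 2^{c\,n^{1/2}\log n}$ by taking the maximum of the trivial bound $i(G)\ge 2^t$ and Theorem~\ref{mr}, and they explicitly leave the $(\log n)^{3/2}$ exponent as an open problem whose sharpness would follow from Kim's $R(3,t)$ construction. So there is no proof in the paper to compare yours against, and indeed your ``proposal'' is not a proof --- it is an accurate diagnosis of why the conjecture is hard, and you acknowledge as much.

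Within those limits your analysis is correct and somewhat more refined than the paper's own discussion. You correctly compute that $\max\{t,\ (n/2400t)\log^2 t\}$ already yields the conjectured order $\sqrt{n}(\log n)^{3/2}$ for $t\le\sqrt{n\log n}$ (via Theorem~\ref{mr}, using that $(\log^2 t)/t$ is decreasing for large $t$) and for $t\ge\sqrt{n}(\log n)^{3/2}$ (via the independent neighborhood of a max-degree vertex), so the genuinely open range is the window $\sqrt{n\log n}<t<\sqrt{n}(\log n)^{3/2}$, where the crossover of the two bounds sits at $t\asymp\sqrt{n}\log n$ and leaves a gap of exactly a $\sqrt{\log n}$ factor. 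You also correctly observe that passing to a random induced subgraph or trimming high-degree vertices and reapplying Theorem~\ref{mr} cannot improve matters, since in both cases the effective value of $(n/t)\log^2 t$ only shrinks. These negative observations are sound. The constructive suggestions at the end (a two-parameter bound in $t$ and $\Delta$ that sums the $2^\Delta$ contribution from a high-degree neighborhood with an AKS/Shearer-type count on $V\setminus N[v]$, or an entropy/container argument calibrated to Kim-type graphs) are plausible directions but remain speculative; in particular the double-counting issue you flag is real, and you give no mechanism to resolve it. The bottom line is that the conjecture remains open after your proposal, exactly as it does after the paper, but your identification of the hard regime and of why the paper's two bounds cannot bridge it is correct.
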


The conjecture, if true, is sharp (apart from the constant in the exponent) by the graphs (due to Kim~\cite{trifreekim} and more recently Bohman~\cite{trifreebohman})
which show that $R(3,t)=\Omega(t^2/\log t)$. Indeed, their graphs are triangle-free and have independence number
\[
\alpha(G) = \Theta(t) = \Theta(\frac{n}{t}\log{t}) = \Theta(\sqrt{n\log{n}}),
\]
so $i(G) \leq 2^{O(\sqrt{n}\log^{3/2}{n})}$ by \eqref{easyup}.

As mentioned before, throughout the paper, all logarithms are base 2. For a graph $G$,
let $n(G), e(G)$ and $t(G)$ denote the number of vertices, edges, and average degree of $G$.

\section{General case}\label{general}
In this section, we give the simple proof of (\ref{easyprop}).  Our purpose in doing this is to familiarize the reader with
the general approach to the proof of Theorem \ref{mr} in the next section.

\begin{prop}
If G is a graph on n vertices with average degree $t$, where $2 \leq t \leq \frac{n}{800}$, then
$i(G) \geq 2^{\frac{1}{250}\frac{n}{t}\log{t}}$.
\end{prop}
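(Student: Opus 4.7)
The plan is to generate a random independent set $I$ whose Shannon entropy provides the lower bound on $\log i(G)$. Fix $p$ of order $1/t$ and take a uniformly random permutation $\pi$ of $V(G)$ together with independent coins $Z_v\sim\mathrm{Bernoulli}(p)$. Scan the vertices in the order of $\pi$ and put $v$ into $I$ iff $Z_v=1$ and no earlier (in $\pi$) neighbor of $v$ is already in $I$. Since every such $I$ is an independent set, $\log i(G)\ge H(I)\ge H(I\mid\pi)$, and the problem reduces to lower-bounding this conditional entropy.

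Conditional on $\pi$, at each ``eligible'' vertex $v$ (one whose earlier neighbors are not in $I$) the construction makes an independent Bernoulli($p$) decision, so by the chain rule
\[
H(I\mid\pi)\;=\;h(p)\cdot\sum_v\Pr[v\text{ eligible}\mid\pi],
\]
where $h(p)=-p\log p-(1-p)\log(1-p)$. The key eligibility bound is $\Pr[v\text{ eligible}\mid\pi]\ge(1-p)^{d^-(v)}$, where $d^-(v)$ is the number of neighbors of $v$ earlier in $\pi$; this is immediate because the product event $\{Z_u=0\text{ for every earlier neighbor }u\text{ of }v\}$ already forces eligibility and has probability exactly $(1-p)^{d^-(v)}$. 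Averaging over $\pi$, $d^-(v)$ is uniform on $\{0,\ldots,d(v)\}$, so
\[
\Ex_\pi\bigl[(1-p)^{d^-(v)}\bigr]\;=\;\frac{1-(1-p)^{d(v)+1}}{p(d(v)+1)},
\]
and a Jensen/convexity argument using $\sum d(v)=nt$ gives $\sum_v\Pr[v\text{ eligible}]=\Omega(n)$ as long as $p\asymp 1/t$.

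Choosing $p=1/t$ gives $h(p)=\Theta((\log t)/t)$ and hence
\[
\log i(G)\;\ge\;H(I)\;\ge\;h(p)\cdot\Omega(n)\;=\;\Omega\!\left(\frac{n\log t}{t}\right).
\]
Tracking the constants through this optimization produces the $\tfrac{1}{250}$ of the statement; the hypothesis $2\le t\le n/800$ enters only to guarantee that the asymptotic term $(\log t)/t$ dominates lower-order corrections and that the few high-degree vertices (those with $d(v)\gg t$, of which there are at most $O(n/t)$ by the degree-sum identity) cannot destroy the bound $\sum_v\Pr[v\text{ eligible}]=\Omega(n)$.

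The main technical obstacle — and the only step requiring care — is the correlation between the events ``$u\notin I$'' for different earlier neighbors $u$ of $v$: these events are coupled through the shared permutation and through inclusions of even-earlier vertices, so one cannot simply multiply marginals. The trick above sidesteps this by conditioning on the truly independent coins $Z_u$ instead of on the dependent indicators $\mathbf{1}[u\in I]$; the weaker product event $\bigcap\{Z_u=0\}$ already suffices to establish eligibility, so the product bound comes for free. After this observation, everything else is Jensen's inequality and one-variable function analysis, precisely paralleling the standard probabilistic proof of Tur\'an's $\alpha(G)\ge n/(t+1)$, but with the added Bernoulli coins upgrading a size bound into an entropy bound.
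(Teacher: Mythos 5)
Your proposal is correct, and it takes a genuinely different route from the paper. The paper's proof samples a uniformly random $k$-set $H$ with $k=\lfloor n/100t\rfloor$, uses Markov's inequality to show that at least half the choices satisfy $e(H)\le tk^2/n$, observes that any such sparse $H$ contains an independent set on at least a $\tfrac{49}{50}$-fraction of its vertices, and then counts: it divides the number of ``good'' $H$'s by the number of $H$'s that can yield a fixed $I$. Your proof instead builds a random independent set $I$ by the permutation-plus-coins greedy process, observes $\log i(G)\ge H(I)\ge H(I\mid\pi)$, and uses the chain rule to reduce to $\sum_v\Pr[v\text{ eligible}]$. Conceptually this upgrades the Caro--Wei/Boppana permutation proof of Tur\'an's bound into an entropy bound, which is a recognizable alternative technique (in the spirit of occupancy/entropy arguments for counting independent sets). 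I checked the one step you left implicit: $f(d)=\bigl(1-(1-p)^{d+1}\bigr)/\bigl(p(d+1)\bigr)$ is indeed convex in $d$ (its second derivative reduces to $\bigl(2-e^{-u}(u^2+2u+2)\bigr)/y^3\ge 0$ with $u=-y\ln(1-p)$), so Jensen with $\sum_v d(v)=nt$ gives $\sum_v\Pr[v\text{ eligible}]\ge n\,\frac{1-(1-p)^{t+1}}{p(t+1)}\ge n\,\frac{1-1/e}{1+1/t}$, and with $p=1/t$ one gets $H(I)\ge h(1/t)\cdot(1-1/e)\frac{n}{t+1}$, which is comfortably above $\frac{1}{250}\frac{n}{t}\log t$ for all $t\ge 2$. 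In fact your argument yields a constant close to $1-1/e$ rather than $1/250$, and, unlike the paper's proof, never needs the hypothesis $t\le n/800$ (you mention it only as a safety net, but the Jensen bound already handles degree irregularity). The trade-off is that the paper's cruder counting argument is deliberately chosen as a warm-up for the harder triangle-free case, where the same ``count the good $k$-sets, then divide out the overcounting'' template is reused inside the iterated AKS algorithm; it is not obvious how to iterate the entropy argument in the same way.
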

\begin{proof}
	Set $k=\lfloor \frac{1}{100}\frac{n}{t} \rfloor$. Pick a $k$-set uniformly at random from all $k$-sets
	in $V(G)$. Let $H$ be the subgraph induced by the $k$ vertices.
	Then
	\begin{equation*}
	\Ex[e(H)] = \frac{1}{2}nt\frac{\binom{n-2}{k-2}}{\binom{n}{k}}
	= \frac{1}{2}nt\frac{k(k-1)}{n(n-1)} <\frac{1}{2}\frac{tk^2}{n}.
	\end{equation*}
Recall that Markov's inequality states that if $X$ is a positive random variable and $a>0$,
then $\Pr[X \geq a] \leq \Ex[X]/a$; hence $\Pr[e(H) \geq 2\frac{1}{2}\frac{tk^2}{n}] \leq 1/2$.
So for at least half of the choices for $H$, $e(H) \leq \frac{tk^2}{n}$. Therefore,
the number of choices of $H$ for which $e(H) \leq \frac{tk^2}{n}$ is at least
\begin{equation}\label{hct}
	\frac{1}{2}\binom{n}{k} \geq \frac{1}{2}(\frac{n}{k})^k >
	2^{\frac{k}{2}\log{n/k}} =
	2^{\frac{k}{2}(\log{n}-\log{k})}.
\end{equation}

Now, if $e(H) < \frac{tk^2}{n} = \frac{1}{100}k$, then at most $\frac{1}{50}k$ of the
vertices in $H$ have degree at least one. This in turn implies that $H$ contains an independent
set $I$ of size at least $\frac{49}{50}k$. The set $I$ can be obtained from any $H$
which contains it; the number of ways to pick the $\frac{1}{50}k$ vertices of $H-I$ is at most
\begin{equation}
	\binom{n}{k/50} \leq (\frac{50ne}{k})^{k/50} =
	2^{\frac{k}{50}\log{50ne}-\frac{k}{50}\log{k}} \leq
	2^{\frac{k}{50}(\log{n}-\log{k}) + \frac{k}{50}\log{100t}}.
\end{equation}
Combining this with \eqref{hct} and using $\frac{20}{23}\frac{1}{100}\frac{n}{t} < k  \leq \frac{1}{100}\frac{n}{t} $ for $t \leq \frac{n}{800}$,
\begin{equation*}
	i(G) \geq
	 2^{k(\frac{1}{2}-\frac{1}{50})(\log{n}-\log{k})-\frac{k}{50}\log{100t}} \geq
	2^{k\frac{24}{50}\log{100t}-\frac{k}{50}\log{100t}} =
	2^{k\frac{23}{50}\log{100t}} >
	2^{\frac{1}{250}\frac{n}{t}\log{t}}.
\end{equation*}
\end{proof}

\section{Triangle-free graphs}\label{trifree}
In this section we prove our main result, Theorem \ref{mr}.  We begin with some modifications of a
lemma from \cite{trifreeaks} (see the proof of Lemma 4 in \cite{trifreeaks}).
\begin{lemma}\label{akslemma} {\bf (Ajtai-Koml\'os-Szemer\'edi~\cite{trifreeaks})}
Suppose that $G$ is a triangle-free graph on $n$ vertices with average degree $t$, and
let $k \leq n/100t$. Let $H$ be the subgraph consisting of $k$ vertices chosen
uniformly at random from all the $k$-sets contained in $\{v \in V(G): deg(v) \leq 10t\}$.
Let $M$ be the subgraph of $G$ consisting of vertices adjacent to no vertex in $H$.
Let $n'$ and $t'$ denote the number of vertices and average degree of $M$. Then the
random variables $H$ and $M$ satisfy:
\begin{align}
	\Ex[n(M)] &> n(1-\frac{k}{n-t})^{t+1} > \frac{9n}{10} \label{exvm}  \\
	\Ex[e(M)] &> \frac{nt}{2}(1-\frac{k}{n-20t})^{20t+1} > \frac{nt}{10} \label{exem} \\
	\Ex[e(H)] &= \frac{1}{2}nt\frac{k(k-1)}{n(n-1)} \leq \frac{tk^2}{n} \label{exeh}  \\
	\Var[n(M)] &< \frac{2nk(t+1)(10t+1)}{n-k-20t-2} < nt \label{varvm} \\
	\Var[e(M)] &< 2400kt^4 < 40nt^3 \label{varem} \\
	\Var[e(H)] &\leq \frac{tk^2(10k+n)}{n^2} \label{vareh}
\end{align}
Further, if $e(M) < (1+\delta)\Ex[e(M)]$ and $n(M) > (1-\delta)\Ex[n(M)]$, then
$n'/t' > \nu n/t$, where $\delta = 800\sqrt{t/n}$ and $\nu=1-1/t-c_{10}\sqrt{t/n}$
for some positive constant $c_{10}$.
\end{lemma}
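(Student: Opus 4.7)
My plan is to handle all six estimates in a single moment-computation framework: express each quantity as a sum of indicator variables over vertices, edges, or pairs, and compute its first and second moments under the uniform distribution of $H$ on $k$-element subsets of $L := \{v \in V(G) : \deg(v) \leq 10t\}$. Markov's inequality applied to the degree sequence gives $|L| \geq 9n/10$, so I may freely use $|L| \geq n-t$ wherever convenient. All computations then reduce to hypergeometric probabilities of the form $\Pr[H \cap S = \emptyset] = \binom{|L|-|S \cap L|}{k}/\binom{|L|}{k}$ for various ``forbidden'' sets $S$.

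For the first moments, write $n(M) = \sum_v \mathbf{1}[v \in M]$. Since $v \in M$ iff $H \cap N_L(v) = \emptyset$, $\Pr[v \in M] = \prod_{i=0}^{d_L(v)-1}(1 - k/(|L|-i))$. Standard hypergeometric estimates plus convexity (the function is log-convex in $d_L(v)$), combined with $\sum_v \deg(v) = nt$ and $d_L(v) \leq \deg(v)$, produce the displayed lower bound. For $\Ex[e(M)] = \sum_{uv \in E} \Pr[u,v \in M]$, triangle-freeness enters crucially: for any edge $uv$ we have $N(u) \cap N(v) = \emptyset$, so the forbidden set $N_L(u) \cup N_L(v)$ has size at most $\deg(u)+\deg(v) \leq 20t$ when both endpoints are in $L$; an analogous convexity step across edges yields the factor $(1 - k/(n-20t))^{20t+1}$. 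The value of $\Ex[e(H)]$ is the immediate hypergeometric count, with the upper bound $tk^2/n$ from $|L| \leq n$.

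The variances are second-moment expansions $\Var[X] = \sum_{a,b}\mathrm{Cov}(\mathbf{1}_a,\mathbf{1}_b)$. For $\Var[n(M)]$, the covariance of $\mathbf{1}[u \in M]$ and $\mathbf{1}[v \in M]$ is nonzero only when $u$ and $v$ share an $L$-neighbor; the number of such ordered pairs is $\sum_w d_L(w)(d_L(w)-1) \leq (10t)(nt)$, and each contributing covariance is of order $k/|L|$, producing the stated $2nk(t+1)(10t+1)/(n-k-20t-2)$ bound. The variance $\Var[e(M)]$ is the technical heart of the lemma: for two edges $uv$ and $u'v'$ one must bound $\mathrm{Cov}(\mathbf{1}[u,v \in M], \mathbf{1}[u',v' \in M])$ and sum, splitting into cases according to whether the edges are vertex-disjoint with disjoint $L$-neighborhoods, share a vertex, or are joined by a path of length $2$. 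Triangle-freeness bounds the union $N_L(u) \cup N_L(v) \cup N_L(u') \cup N_L(v')$ in each case and limits the number of dependent edge pairs to $O(nt^3)$, producing the target $2400kt^4$. The variance $\Var[e(H)]$ follows from a direct hypergeometric second-moment computation over edge indicators, with the $10k$ term accounting for shared-endpoint edge pairs.

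For the deterministic conclusion, $n'/t' = n(M)^2/(2e(M))$, so the two hypotheses give $n'/t' > (1-\delta)^2 \Ex[n(M)]^2/(2(1+\delta)\Ex[e(M)])$. Substituting the already-derived lower bound on $\Ex[n(M)]$ together with a matching upper bound $\Ex[e(M)] \leq (nt/2)(1 + O(kt/n))$ from the same first-moment calculation, and expanding to first order in $kt/n$ and in $\delta = 800\sqrt{t/n}$ using $k \leq n/(100t)$, yields $n'/t' > (1 - 1/t - c_{10}\sqrt{t/n})(n/t)$ for a suitable constant $c_{10}$. The main obstacle throughout is $\Var[e(M)]$: the edge-pair covariance sum must be carefully decomposed into several geometric configurations, and the triangle-free hypothesis is needed in each to keep the combined neighborhood small enough that the ``dependent'' contribution does not dominate the independent main term.
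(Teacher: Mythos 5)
The paper does not actually prove Lemma~\ref{akslemma}: it quotes it from Ajtai--Koml\'os--Szemer\'edi, and the only new content is the Remark that the second inequalities (proved by AKS for $k=n/100t$) extend to all smaller $k$. So there is no in-paper proof to compare against; I can only judge your plan on its own terms. Your general framework---expressing $n(M),e(M),e(H)$ as indicator sums, computing hypergeometric first and second moments, using convexity of $d\mapsto\binom{|L|-d}{k}/\binom{|L|}{k}$ together with Jensen for the first moments, and decomposing the covariance sums by geometric configuration---is the right shape and is essentially the AKS machinery. Two small slips: $|L|\geq 9n/10$ does \emph{not} give $|L|\geq n-t$ in the regime $t\ll n$ that matters; and the map $d\mapsto\binom{|L|-d}{k}/\binom{|L|}{k}$ is log-\emph{concave}, not log-convex (what you actually need and what does hold is ordinary convexity, so Jensen goes through, but the stated justification is wrong).

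The genuine gap is in the deterministic conclusion. From $n'/t'=n(M)^2/(2e(M))$ and the two hypotheses you correctly reduce to bounding $\Ex[n(M)]^2/\Ex[e(M)]$ from below, but the upper bound you propose, $\Ex[e(M)]\leq (nt/2)(1+O(kt/n))$, is far too weak. The lower bound on $\Ex[n(M)]^2$ already costs a factor $(1-k/(n-t))^{2(t+1)}\approx 1-2tk/n$, and $2tk/n$ can be as large as $1/50$, which dwarfs the target loss $1/t+c_{10}\sqrt{t/n}$. For the ratio to survive, the upper bound on $\Ex[e(M)]$ must carry an exactly matching $-2tk/n$ correction so that the two cancel. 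That correction has to come from the fact that for an edge $uv$ the forbidden set $N_L(u)\cup N_L(v)$ has size $|N_L(u)|+|N_L(v)|$ (by triangle-freeness), and averaged over edges this degree sum is $\sum_v\deg(v)^2/(nt/2)\geq 2t$ by Cauchy--Schwarz/Jensen; this lower bound of $2t$ on the exponent is precisely what makes $\Ex[e(M)]$ fall at least as fast as $\Ex[n(M)]^2$. Nowhere in your outline do you invoke $\sum_v\deg(v)^2\geq nt^2$ or anything equivalent, and the $(1+O(kt/n))$ error you wrote has the wrong sign structure to produce the needed cancellation. Without this step, the deterministic part of the lemma does not follow from your moment estimates.
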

\bigskip

\noindent
{\bf Remark.}
Ajtai-Koml\'os-Szemer\'edi state their lemma for $k = n/100t$ and prove each of the
first inequalities in \eqref{exvm}, \eqref{exem}, \eqref{varvm}, and \eqref{varem} for all $k$.
They prove each of the second inequalities for $k = n/100t$, but it is easily observed that
they continue to hold for $k < n/100t$.
\bigskip

 The next lemma is implied by the computation in the proof of Lemma 4 from \cite{trifreeaks}.
However, the last statement of Lemma \ref{thelemma} is crucial to our proof of Theorem \ref{mr}, so we
make the computations in \cite{trifreeaks} explicit.
\begin{lemma}\label{thelemma}
Suppose $G$ is a triangle-free graph on $n>2^{50}$ vertices with average degree $t \leq 2\sqrt{n}\log{n}$ and
$k \leq n/100t$. Then $G$ contains a subgraph $H$ with $n(H) = k$
$e(H) \leq k/50$.
Moreover, if $M$ is the subgraph of $G$ consisting of vertices adjacent to no vertex
in $H$, then
\begin{enumerate}
	\item \label{vertcond} $n(M) > n(G)/2$ and
	\item \label{ratiocond} $n(M)/t(M) > \nu n/t$, where $\nu = 1-1/t-c_{10}\sqrt{t/n}$.
\end{enumerate}
Further, if the vertices in $H$ are chosen uniformly at random from all the $k$-sets
contained in $\{v \in V(G): deg(v) \leq 10t\}$, then at least half of the choices
for $H$ satisfy $e(H) \leq k/50$, along with conditions \ref{vertcond} and \ref{ratiocond}.
\end{lemma}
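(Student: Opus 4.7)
My plan is to prove the final (``Further'') claim directly; the existence statement and properties 1 and 2 then follow by specialization. I let $H$ be a uniformly random $k$-subset of the low-degree set $\{v \in V(G): \deg(v) \leq 10t\}$ and aim to show that the three bad events
\[
A_1 = \{e(H) > k/50\}, \quad A_2 = \{n(M) \leq n/2\}, \quad A_3 = \{n(M)/t(M) \leq \nu n/t\}
\]
have total probability strictly less than $1/2$, so that at least half of the choices of $H$ realize the desired configuration.

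For $A_1$, inequality \eqref{exeh} gives $\Ex[e(H)] \leq tk^2/n \leq k/100$ (using $k \leq n/100t$), and a Chebyshev application with \eqref{vareh} (noting $10k + n \leq 2n$) yields $\Pr[A_1] = O(t/n)$, tiny under $n > 2^{50}$ and $t \leq 2\sqrt{n}\log n$. For $A_2$, inequality \eqref{exvm} gives $\Ex[n(M)] \geq 9n/10$, so $A_2 \subseteq \{|n(M) - \Ex[n(M)]| \geq 2n/5\}$, and Chebyshev with \eqref{varvm} gives
\[
\Pr[A_2] \leq \frac{25 \Var[n(M)]}{4n^2} \leq \frac{25 t}{4n},
\]
again negligible.

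For $A_3$, I invoke the final sentence of Lemma \ref{akslemma} as a black box: with $\delta = 800\sqrt{t/n}$, the joint event $\{n(M) > (1-\delta)\Ex[n(M)]\} \cap \{e(M) < (1+\delta)\Ex[e(M)]\}$ forces $n(M)/t(M) > \nu n/t$. Applying Chebyshev once to $n(M)$ using \eqref{varvm} and once to $e(M)$ using \eqref{exem} and \eqref{varem} gives tail bounds of the form $\Var/(\delta\,\Ex)^2$, each of which reduces to a small absolute constant---the constant $800$ in the definition of $\delta$ is precisely what drives these ratios well below $1$. Summing, the three failure probabilities stay comfortably below $1/2$, as required. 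The main subtlety worth flagging is that two distinct Chebyshev applications to $n(M)$ are needed: one at deviation $2n/5$ to secure condition 1, and the other at the smaller calibrated deviation $\delta\,\Ex[n(M)]$ to feed the implication supplying condition 2. In the regime where $\delta$ is of order $1$ the latter does not imply the former on its own, which is why the two conditions must be handled separately; beyond this, the proof is bookkeeping on constants already contained in Lemma \ref{akslemma}.
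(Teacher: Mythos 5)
Your proposal is correct and follows the paper's proof essentially verbatim: you apply Chebyshev's inequality with the moment estimates of Lemma~\ref{akslemma} to each of the three bad events, take a union bound, and invoke the last assertion of Lemma~\ref{akslemma} to convert the two tail bounds for $n(M)$ and $e(M)$ into condition~\ref{ratiocond}. The one place you diverge is condition~\ref{vertcond}: the paper chains $\Pr[n(M)\le n/2]\le\Pr[n(M)\le(1-\delta)\Ex[n(M)]]$, which presumes $n/2\le(1-\delta)\Ex[n(M)]$ and hence (using only $\Ex[n(M)]>9n/10$) that $\delta<4/9$; near the boundary of the allowed range (e.g.\ $n$ just above $2^{50}$ and $t$ near $2\sqrt n\log n$, where $\delta=800\sqrt{t/n}>1$) this containment fails outright, so your direct Chebyshev bound at deviation $2n/5$, giving $\Pr[A_2]\le 25t/(4n)$, is the cleaner and strictly valid route. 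Your closing remark that two separate Chebyshev applications to $n(M)$ are needed --- one at deviation $2n/5$ for condition~\ref{vertcond}, one at $\delta\Ex[n(M)]$ to feed the final implication of Lemma~\ref{akslemma} --- correctly identifies exactly this point; beyond that the two proofs coincide.
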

\bigskip

\begin{proof}
	Recall that for a random variable $X$ and $a>0$, Chebyshev's inequality states that
	$\Pr[|X-\Ex[X]| \geq a] \leq \Var[X]/a^2$. Thus, with $a = k/50-\Ex[e(H)]$,
	Lemma \ref{akslemma} implies
	\begin{align*}
	\Pr[e(H) \geq k/50] &\leq \frac{tk^2(10k+n)}{n^2(k/50-\frac{k^2t}{2n})^2} \\
	&=\frac{t(10k+n)}{n^2(1/50 - \frac{kt}{2n})^2} \\
	&\leq	\frac{t(n/10t +n)}{n^2(1/50-1/200)^2} \\
	&= \frac{1/10 + t}{n(3/200)^2} \\
	&< 5000\frac{t}{n} \\
	&\leq 5000\frac{2\log{n}}{\sqrt{n}} \\
	&\leq	1/1000.
	\end{align*}
	So with probability at most 1/1000, the condition $e(H) \leq k/50$ fails.
	
	Set $\delta = 800\sqrt{t/n}$. Again by Lemma \ref{akslemma} and Chebyshev,
	with $a=\delta\Ex[n(M)]$,
	\begin{equation*}
	\Pr[n(M) \leq n/2] \leq \Pr[n(M) \leq (1-\delta)\Ex[n(M)]] \leq
	\frac{nt}{(\frac{9n}{10})^2800^2\frac{t}{n}} \leq 1/1000.
	\end{equation*}
	Thus the probability that condition \eqref{vertcond} fails is at most 1/1000.
	
	With $a=\delta\Ex[e(M)]$,
	\begin{equation*}
	\Pr[e(M) \geq (1+\delta)\Ex[e(M)]] \leq
	\frac{40nt^3}{\frac{nt}{10}800^2\frac{t}{n}} = 1/160.
	\end{equation*}
	Since $\Pr[e(M) \geq (1+\delta)\Ex[e(M)] \text{ or } n(M) \leq n/2] <
	1/160 + 1/1000$, the last assertion of Lemma \ref{akslemma} implies
	that the probability of condition \eqref{ratiocond} failing is at most
	$1/160 + 1/1000$. Therefore, the probability that condition $e(H) \leq k/50$
	fails or condition \eqref{vertcond} fails or condition \eqref{ratiocond} fails
	is at most $1/1000 + 1/1000 + 1/160 + 1/1000 < 1/2$.
\end{proof}

Our proof of Theorem \ref{mr} is achieved by analyzing Algorithm \ref{algorithm} below.
The algorithm is a slight modification of the algorithm from \cite{trifreeaks} that
yields an independent set of size $\frac{1}{100}\frac{n}{t}\log{t}$. Recall that $c_{10}$
is the constant that appear in Lemma \ref{akslemma}.

\begin{algorithm} [ht]
	\KwIn{Triangle-free graph $G$ with $n$ vertices, average degree $t$}
	\KwOut{Independent set $I$}
	$M_o = G$\;
	$R = \lfloor (\log{t})/2 \rfloor$\;
	\For{$i\leftarrow 0$ \KwTo $R$} {
		$n_i = $ number of vertices in $M_i$\;
		$t_i = $ average degree in $M_i$\;
		$\nu_i = 1 - 1/t_{i-1} - c_{10}\sqrt{t_{i-1}/n_{i-1}}$\;
		\uIf{$i = 0$ \emph{or} $\nu_i > 1-1/\log{t}$} { \nllabel{condnu}
			Apply Lemma \ref{thelemma} with $G = M_i$ and $k = \lfloor \frac{1}{200}\frac{n}{t} \rfloor$\;
			$M_{i+1}, H_{i+1} = M, H$ from Lemma \ref{thelemma}\; \nllabel{getmh}
		}
		\Else{
			$I = $ Independent set in $M_{i-1}$ of size $\lceil n_{i-1}/(t_{i-1}+1) \rceil$\;
			\Return{$I$}\; \nllabel{retturan}
		}
	}
	$H = H_1 \cup \dots \cup H_R$\; \nllabel{lineh}
	$I = $ Independent set in $H$ of size $\lceil \frac{48}{50}kR \rceil$\; \nllabel{linei}
	\Return{$I$}\; \nllabel{reth}
	\caption{Independent set algorithm}
	\label{algorithm}
\end{algorithm}

\begin{thm}\label{algthm}
Suppose Algorithm \ref{algorithm} is run on a triangle-free graph $G$ with
$n$ vertices and average degree $t$, where $2^{100} < t < \sqrt{n}\log{n}$ and $n > (3c_{10})^{12}$.
If Algorithm \ref{algorithm} terminates at line \ref{retturan},
then $|I| > \frac{1}{2}\frac{n}{t}\log^2{t}$. Otherwise, for each iteration $i=0,...,R-1$,
Algorithm \ref{algorithm} successfully applies Lemma \ref{thelemma} to the graph
$M_i$ to obtain a graph $H_{i+1}$ with $k=\lfloor \frac{1}{200}\frac{n}{t} \rfloor$ vertices.
Moreover, the graph $H$ in line \ref{lineh} is the disjoint union of the $H_i$, and
the independent set $I$ in line \ref{linei} consists of $\frac{48}{50}kR$ vertices from $H$.
\end{thm}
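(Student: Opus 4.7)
The plan is to handle the two termination branches of Algorithm \ref{algorithm} separately.

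Suppose first that the algorithm exits at line \ref{retturan} via the Else branch at some iteration $i$. Since the condition $i=0$ forces the If branch, we have $i\ge 1$, and iterations $0,\ldots,i-1$ all succeeded, giving $\nu_j>1-1/\log t$ for $1\le j\le i-1$. Iterating condition \ref{ratiocond} of Lemma \ref{thelemma} preserves the ratio $n_{i-1}/t_{i-1}>\prod_{j=1}^{i-1}\nu_j\cdot(n/t)>(1-1/\log t)^{R}(n/t)>n/(2t)$, using $R\le(\log t)/2$ and $(1-1/\log t)^{(\log t)/2}\to e^{-1/2}$. Iterating condition \ref{vertcond} similarly yields $n_{i-1}>n/2^{i-1}\ge 2n/\sqrt t$. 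The failure $\nu_i\le 1-1/\log t$ gives $1/t_{i-1}+c_{10}\sqrt{t_{i-1}/n_{i-1}}\ge 1/\log t$, so at least one of (A) $t_{i-1}\le 2\log t$ or (B) $n_{i-1}/t_{i-1}\le 4c_{10}^2\log^2 t$ holds. I would rule out (B) by noting that with the maintained ratio it forces $n/t<8c_{10}^2\log^2 t$, which contradicts the hypothesis $t<\sqrt n\log n$ under the quantitative assumption $n>(3c_{10})^{12}$ (since $n/t>\sqrt n/\log n$ dominates $\log^2 t$). In case (A) the Tur\'an-type set returned at line \ref{retturan} has size at least
\[
\frac{n_{i-1}}{t_{i-1}+1}\;\ge\;\frac{2n/\sqrt t}{3\log t}\;>\;\frac{1}{2}\frac{n}{t}\log^2 t,
\]
the last inequality being equivalent to $\sqrt t>\tfrac{3}{4}\log^3 t$, which holds comfortably when $t>2^{100}$.

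Suppose next that the algorithm never enters the Else branch. Then at each iteration $i=0,\ldots,R-1$ it invokes Lemma \ref{thelemma} on $M_i$ with $k=\lfloor n/(200t)\rfloor$, and I need to check that the four hypotheses of Lemma \ref{thelemma} are met by $M_i$. All of them are routine consequences of the maintained estimates $n_i>n/2^i$ and $n_i/t_i>n/(2t)$ combined with $t<\sqrt n\log n$: triangle-freeness descends from $G$; the vertex bound $n_i>2^{50}$ is clear since $n_i\ge n/\sqrt t$ is large; the degree hypothesis $t_i\le 2\sqrt{n_i}\log n_i$ follows from the preserved ratio and the bound on $t$; and $k\le n_i/(100t_i)$ is immediate from the definition of $k$ and the ratio. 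The application of Lemma \ref{thelemma} to $M_i$ therefore produces a $k$-vertex subgraph $H_{i+1}\subseteq M_i$ with $e(H_{i+1})\le k/50$, together with $M_{i+1}\subseteq M_i\setminus H_{i+1}$ whose vertices are non-adjacent to every vertex of $H_{i+1}$.

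It remains to verify that $H=H_1\cup\cdots\cup H_R$ is a disjoint union of the $H_j$ and that it supports an independent set of size $\lceil(48/50)kR\rceil$. The $M_j$ form a decreasing chain with $H_{j+1}\subseteq M_j\subseteq M_0\setminus(H_1\cup\cdots\cup H_j)$, so the $H_j$'s are pairwise disjoint and $|V(H)|=kR$. Any potential edge between $H_j$ and $H_{j'}$ with $j<j'$ is excluded because $H_{j'}\subseteq M_j$ consists of vertices non-adjacent to $H_j$; hence every edge of $H$ lies inside a single $H_j$, and $e(H)\le R\cdot(k/50)=kR/50$. Removing one endpoint of each edge deletes at most $2e(H)\le kR/25$ vertices of $H$, leaving an independent set of size at least $kR-kR/25=(48/50)kR$, as claimed.

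The main obstacle I anticipate is the bookkeeping required to rule out case (B) in the early-termination analysis: one must trace the interplay between the maintained ratio $n_{i-1}/t_{i-1}\gtrsim n/t$ and the hypothesis $t<\sqrt n\log n$, which is precisely where the quantitative assumption $n>(3c_{10})^{12}$ intervenes. The remaining verifications are direct consequences of the estimates in Lemma \ref{akslemma}.
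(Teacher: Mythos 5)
Your proof follows the same two-case decomposition as the paper (early termination at line \ref{retturan} vs.\ full run to line \ref{reth}), and the line \ref{reth} analysis is essentially identical: you maintain $n_i > n/2^i$ and $n_i/t_i > n/(2t)$, verify the hypotheses of Lemma \ref{thelemma}, and extract the independent set from the disjoint sparse pieces $H_j$. That part is correct and matches the paper.

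For the line \ref{retturan} case you take a genuinely different route. The paper does not split into ``which term of $1/t_i + c_{10}\sqrt{t_i/n_i}$ is large''; instead it bounds both terms by powers of $t_i^{1/3}$ (inequalities \eqref{ti1}, \eqref{ti2}) and concludes directly that $t_i < \log^3 t$, then combines this with $n_i > n/\sqrt{t}$. You instead observe that at least one of $1/t_{i-1} \ge 1/(2\log t)$ (your case A) or $c_{10}\sqrt{t_{i-1}/n_{i-1}} \ge 1/(2\log t)$ (your case B) must hold, and then rule out (B). Your approach yields the stronger bound $t_{i-1} \le 2\log t$, but it transfers the burden onto eliminating (B), and that is where your argument is underjustified. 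You write that (B) ``forces $n/t < 8c_{10}^2\log^2 t$, which contradicts $t < \sqrt{n}\log n$ under $n > (3c_{10})^{12}$, since $n/t > \sqrt{n}/\log n$ dominates $\log^2 t$.'' The word ``dominates'' is doing real work here: you need the explicit inequality $\sqrt{n}/\log n > 8c_{10}^2\log^2 t$, and $n > (3c_{10})^{12}$ alone is not enough to deliver it — for $c_{10}$ small, $(3c_{10})^{12}$ may be tiny. You must also invoke $n > t > 2^{100}$, and then check that $\sqrt{n} > 8c_{10}^2\log^3 n$ for all $n$ above $\max\{(3c_{10})^{12}, 2^{100}\}$ (which holds by monotonicity of $\sqrt{n}/\log^3 n$, but needs to be said). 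As written this is a gap, albeit a fillable one. Note also that the paper reaches only $t_i < \log^3 t$, avoiding this dichotomy entirely; your case (A) bound $t_{i-1}\le 2\log t$ is not needed for the conclusion, since $t_i < \log^3 t$ already gives $n_i/(t_i+1) > (n/\sqrt t)/(\log^3 t+1) > (n/t)\log^2 t$.

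One small arithmetic remark: when extracting the independent set from $H$ you say ``removing one endpoint of each edge deletes at most $2e(H)$ vertices.'' Removing one endpoint per edge removes at most $e(H)$ vertices; what you actually want is that the number of \emph{non-isolated} vertices is at most $2e(H) \le kR/25$, so the isolated vertices already form an independent set of size $\ge (48/50)kR$. The conclusion is the same, but the reasoning should be phrased as in the paper.
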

\begin{proof}
We break our proof into two cases, depending on whether Algorithm \ref{algorithm}
terminates at line \ref{reth} or \ref{retturan}.

\noindent {\bf Line \ref{reth}}: We need to show that Lemma \ref{thelemma} can be applied
at every iteration and that the graph $H$ in line \ref{lineh} contains an
independent set of size at least
$\frac{48}{50}kR \geq \frac{1}{500}\frac{n}{t}\log{t}$.

If $i=0$, then $k \leq n/100t$, $t \leq \sqrt{n}\log{n}$,
and $n > t > 2^{50}$, so we may apply Lemma
\ref{thelemma} to obtain graphs $M_1$  and $H_1$, where $|V(H_1)|=k$.
Suppose that $i > 0$ and that Lemma \ref{thelemma} was successfully applied at all previous iterations.
Using \ref{vertcond} of Lemma \ref{thelemma}, $i<R$, and $R=\lfloor (\log{t})/2 \rfloor <(\log{n})/2$,
\begin{equation}\label{nibound}
n_i \geq n/2^i > n/2^R > \sqrt{n} > \sqrt{t} > 2^{50}.
\end{equation}
By the condition in line \ref{condnu}, $\nu_i > 1-1/\log{t}$ for each
iteration $i$. So by \ref{ratiocond} of Lemma \ref{thelemma},
$\frac{n_i}{t_i} \geq \frac{n}{t}\nu_1\nu_2\dots\nu_i > \frac{n}{t}(1-1/\log{t})^R$. Thus
\begin{equation*}
\frac{n_i}{t_i} > \frac{n}{t}(1-1/\log{t})^R >
\frac{n}{t}(1-\frac{R}{\log{t}}) \geq \frac{1}{2}\frac{n}{t}.
\end{equation*}
In particular,
\begin{equation}\label{lemmak}
	k \leq \frac{1}{200}\frac{n}{t} \leq \frac{1}{100}\frac{n_i}{t_i},
\end{equation}
	and also, $t < \sqrt{n}\log{n}$ and $n_i < n$ yield
\begin{equation}\label{lemmat}
t_i < 2n_i\frac{t}{n} \leq 2n_i\frac{\log{n}}{\sqrt{n}}
\leq 2n_i\frac{\log{n_i}}{\sqrt{n_i}} = 2\sqrt{n_i}\log{n_i}.
\end{equation}
The inequalities \eqref{nibound}, \eqref{lemmak}, and \eqref{lemmat} ensure that we may
again apply Lemma \ref{thelemma} with $M_i, M_{i+1}, H_{i+1}, $ and $k$ playing the roles
of $G, M, H, $ and $k$, respectively. Applying Lemma \ref{thelemma} $R$ times yields a collection of
sparse graphs $H_1, H_2, \dots, H_R$, each with $k$ vertices. Each
$H_i$ contains at most $\frac{2}{50}k$ vertices of degree at least one,
so each $H_i$ contains an independent set of size at least $\frac{48}{50}k$.
By definition of $M_i$, these independent sets may be combined into one
independent set of size at least $\frac{48}{50}kR$.

\noindent {\bf Line \ref{retturan}}: Suppose that the algorithm terminates at line
\ref{retturan} during iteration $i+1$. Then $n_i$, $t_i$ (and $n_{i+1}$, $t_{i+1}$)
have been defined and $\nu_{i+1} = 1-1/t_i-c_{10}\sqrt{t_i/n_i}$.
If $t_i \leq (\frac{3}{2})^{2/3}$, then $1/t_i > 1/\log^3{t}$.
Assume $t_i > (\frac{3}{2})^{2/3}$. Then
\begin{equation}\label{ti1}
 \frac{2}{3}t_i^{-1/3} > 1/t_i.
\end{equation}
By \eqref{nibound}, $n_i > \sqrt{n}$, so for $n > (3c_{10})^{12}$,
\[
n_i^3 > n^{3/2} > (3c_{10})^6n > (3c_{10})^6t_i.
\]
This implies
\begin{equation}\label{ti2}
 \frac{1}{3}t_i^{1/3} > c_{10}\sqrt{t_i/n_i}.
\end{equation}
Combining \eqref{ti1} and \eqref{ti2} yields
\begin{equation*}
1-t_i^{-1/3} < 1-1/t_i - c_{10}\sqrt{\frac{t_i}{n_i}} = \nu_{i+1} \leq 1-1/\log{t}.
\end{equation*}
Thus $1/t_i > 1/\log^3{t}$. Since $t \geq 2^{100}$ (which implies that $t > (\log^5{t}+\log^2{t})^2$)
and $i < R \leq \frac{\log{t}}{2}$,
\begin{equation*}
\frac{n_i}{t_i+1} > \frac{n}{2^i}\frac{1}{(\log^3{t}+1)} \geq
\frac{n}{\sqrt{t}}\frac{1}{(\log^3{t}+1)} = \frac{n}{t}\frac{\sqrt{t}}{(\log^3{t}+1)} >
\frac{n}{t}\log^2{t}.
\end{equation*}
Turan's theorem now implies that $M_i$ contains an independent set of size at least
$\frac{n_i}{t_i+1} > \frac{1}{2}\frac{n}{t}\log^2{t}$.
\end{proof}

\bigskip

We now complete the proof of Theorem \ref{mr} by obtaining a lower bound on the number of
outcomes given by line \ref{reth} of Algorithm \ref{algorithm}.

\noindent
{\bf  Proof of Theorem \ref{mr}.}
Recall that we are to show that if  $G$ is a triangle-free graph on $n$ vertices with
average degree $t$ sufficiently large, then $i(G) \geq 2^{\frac{n}{2400t}\log^2{t}}$.
Assume $t > \max\{(3c_{10})^{12}, 2^{100}\}$.
Then $n > t > (3c_{10})^{12}$.
Also, if $t > \sqrt{n}\log{n}$, then $G$ has
a vertex whose neighborhood contains at least
$t > \frac{n}{\sqrt{n}}\log{n} > \frac{n}{t}\log^2{n}$ vertices. Since $G$
is triangle-free, this neighborhood forms an independent set, which
contains at least $2^{\frac{n}{t}\log^2{n}} > 2^{\frac{n}{2400t}\log^2{t}}$ subsets, which are also independent.
Thus we may assume that $t \leq \sqrt{n}\log{n}$; in particular,
$G$ satisfies the hypotheses of Theorem \ref{algthm}.

If the algorithm terminates at line \ref{retturan}, then $G$ contains
an independent set of size at least $\frac{n}{2t}\log^2{t}$; so
$G$ contains at least $2^{\frac{n}{2t}\log^2{t}} > 2^{\frac{n}{2400t}{\log^2{t}}}$ independent sets,
and we are done.
Thus we may assume that Algorithm \ref{algorithm} terminates at line \ref{reth}.
Consequently, at each iteration $i$, the algorithm applies
Lemma \ref{thelemma} to pick a sparse graph with $k=\lfloor \frac{1}{2}\frac{n}{100t} \rfloor$ vertices.
The vertices in this graph are chosen from
\[
L_i = \{v \in V(M_i): deg(v) \leq 10t_i\}.
\]
Note that
\[
n_it_i = \sum_{v\in L_i}{deg(v)}+\sum_{v\in V(M_i)-L_i}{deg(v)} \geq \sum_{v\in V(M_i)-L_i}{deg(v)} \geq (n_i-|L_i|)10t_i.
\]
This, together with \ref{vertcond} in Lemma \ref{thelemma}, implies $|L_i| \geq \frac{9}{10}n_i > \frac{9}{10}n_{i-1}/2 \geq \frac{9}{10}n/2^i$.
At least half of the $k$-sets in $L_i$ satify the conditions of Lemma \ref{thelemma},
so the number of choices for $H_i$ is at least
\[
\frac{1}{2}\binom{|L_i|}{k} \geq \frac{1}{2}\binom{.9n/2^i}{k}.
\]
Therefore, the number of choices
for the sequence $H_1,\dots,H_R$ is at least
\begin{align}\label{thct}
\prod_{i=0}^{R-1}{\frac{1}{2}\binom{|L_i|}{k}} \geq
\frac{1}{2^R}\prod_{i=0}^{R-1}{\binom{.9n/2^{i}}{k}}
 &\geq \frac{1}{2^R}(\frac{.9n}{k})^{kR}2^{-kR^2/2} \notag \\
 &= 2^{kR\log{.9n}-kR\log{k}-kR^2/2-R} \notag \\
 &= 2^{kR(\log{n}-\log{k})-kR^2/2+kR\log{.9}-R} \notag \\
 &> 2^{kR(\log{n}-\log{k})-\frac{kR}{2}\frac{\log{t}}{2}-kR-R} \notag \\
 &> 2^{kR(\log{n}-\log{k})-\frac{kR}{4}\log{t}-2kR}.
\end{align}

Recall that Algorithm \ref{algorithm} obtains an independent set $I$ of size
$\lceil \frac{48}{50}kR \rceil$ from the graph $H=H_1 \cup \dots \cup H_R$. For a fixed
$I$, the number of graphs $H$ that yield $I$ is at most the number of
possibilities for $H-I$. This is at most $\binom{n}{|V(H-I)|}$,
which is at most
\begin{align}\label{thict}
\binom{n}{\frac{2}{50}kR} \leq (\frac{50ne}{2kR})^{\frac{2}{50}kR}
 &= 2^{\frac{2}{50}kR\log{n} + \frac{2}{50}kR\log{50e} - \frac{2}{50}kR\log{2kR} } \notag \\
 &< 2^{\frac{2}{50}kR(\log{n}-\log{k}) + \frac{2}{50}kR\log{50e}} \notag \\
 &< 2^{\frac{2}{50}kR(\log{n}-\log{k}) + kR}.
\end{align}

For a fixed $H$, the number of partitions $H_1\cup\dots\cup H_R = H$ is
at most the number of partitions of $kR$ elements into $R$ sets of size $k$,
which is less than
\begin{equation}\label{tpct}
\binom{kR}{k}^R \leq (Re)^{kR} = 2^{kR\log{Re}} < 2^{kR\log{R} +2kR} < 2^{kR\frac{1}{4}R +2kR} \leq 2^{kR\frac{1}{8}\log{t} +2kR}.
\end{equation}
Since each $H$ yields an independent set, the total number of independent
sets that can be returned at line \ref{reth} of the algorithm is at least
\begin{equation*}
\frac{\text{\# of ways to obtain $H$}}
{(\text{\# of $H$ that yield a fixed $I$})(\text{\# of partitions that yield $H$})}.
\end{equation*}
Since $\frac{n}{268t} \leq k \leq \frac{n}{200t}$ and $R > (\log{t})/3$,
\eqref{thct}, \eqref{thict}, and \eqref{tpct} imply that this is at least
\begin{align*}
	2^{\frac{48}{50}kR(\log{n}-\log{k}) - \frac{5}{8}kR\log{t}-5kR}
	& \geq 2^{\frac{48}{50}kR\log{200t} - \frac{5}{8}kR\log{t}-5kR} \\
	&> 2^{\frac{134}{400}kR\log{t}} \\
	&> 2^{\frac{134}{1200}k\log^2{t}}\\
	&> 2^{\frac{1}{2400}\frac{n}{t}\log^2{t}}.	
\end{align*}
\qed
\bibliographystyle{amsplain}
\bibliography{bib}
\end{document}